\newtheorem{thm}{Theorem}[section]
\newtheorem{lem}{Lemma}[section]
\newtheorem{dfn}{Definition}[section]
\newtheorem{cor}{Corollary}[section]
\newtheorem{rmk}{Remark}[section]
\title{ On the spectral radius and the energy of eccentricity matrix of a graph}
\author{Iswar Mahato\thanks{Department of Mathematics, Indian Institute of Technology Kharagpur, Kharagpur 721302, India. Email: iswarmahato02@gmail.com}\and R. Gurusamy\thanks{Department of Mathematics, Mepco Schlenk Engineering College, Sivakasi 626005, Tamil Nadu, India. Email: sahama2010@gmail.com}  \and M. Rajesh Kannan\thanks{Department of Mathematics, Indian Institute of Technology Kharagpur, Kharagpur 721302, India. Email: rajeshkannan@maths.iitkgp.ac.in, rajeshkannan1.m@gmail.com } \and S. Arockiaraj\thanks{Department of Mathematics, Government Arts and Science College, Sivakasi 626124, Tamil Nadu, India. Email: psarockiaraj@gmail.com}
}
\date{\today}
\begin{document}
\maketitle
\baselineskip=0.25in

\begin{abstract}

The eccentricity matrix $\varepsilon(G)$ of a graph $G$ is obtained from the distance matrix by retaining the eccentricities (the largest distance) in each row and each column.  In this paper, we give a characterization of the star graph, among the trees, in terms of invertibility of the associated eccentricity matrix. The largest eigenvalue of  $\varepsilon(G)$ is called the $\varepsilon$-spectral radius, and the eccentricity energy (or the $\varepsilon$-energy) of $G$ is the sum of the absolute values of the eigenvalues of $\varepsilon(G)$. We establish some bounds for the $\varepsilon$-spectral radius and characterize the extreme graphs. Two graphs are said to be $\varepsilon$-equienergetic if they have the same $\varepsilon$-energy. For any $n \geq 5$, we construct a pair of  $\varepsilon$-equienergetic graphs on $n$ vertices,  which are not $\varepsilon$-cospectral.
\end{abstract}

{\bf AMS Subject Classification(2010):} 05C12, 05C50.

\textbf{Keywords. } Adjacency matrix, Distance matrix, Eccentricity matrix, Eigenvalue, Energy, Spectral radius.
\section{Introduction}\label{sec1}

All graphs considered in this paper are finite and simple graphs, that is graphs without loops, multiple edges or directed edges. Let $G=(V(G),E(G))$ be a graph with  vertex set $V(G)=\{v_1,v_2,\hdots,v_n\}$ and  edge set $E(G)=\{e_1, \dots , e_m \}$. The \emph{adjacency matrix} of a graph $G$, denoted by $A(G) = (a_{uv})_{n\times n}$, is the $0-1$ matrix whose rows and columns are indexed by the vertices of $G$, and is defined by $a_{uv}=1$ if and only if the vertices $u$ and $v$  are adjacent, and $a_{uv}=0$ otherwise. For two vertices $u,v \in V(G)$, let $P(u,v)$ denote the path joining the vertices $u$ and $v$. The \emph{distance} between the vertices $u,v\in V(G)$, denoted by $d_{G}(u,v)$, is  the minimum length of the paths between $u$  and $v$. Let $D(G)=(d_{uv})_{n\times n}$ be the distance matrix of $G$, where $d_{uv}=d_{G}(u,v)$.  The \emph{eccentricity} $e(u)$ of the vertex $u$ is defined  as $e(u)=max \{d(u,v):  v \in V(G)\}$. A vertex $v$ is said to be an eccentric vertex of the vertex $u$ if $d_{G}(u,v)=e(u)$. The diameter $diam(G)$, and the radius $rad(G)$ of a graph $G$, is the maximum and the minimum  eccentricity of all  vertices of $G$, respectively. A vertex $u\in V(G)$ is said to be \emph{diametrical vertex} of $G$ if $e(u)=diam(G)$. If each vertex of $G$ has a unique diametrical vertex, then $G$ is called the \emph{diametrical graph} which is studied and referred to as even graphs in \cite{gobel1986even}.

The \emph{eccentricity matrix $\varepsilon(G)=(\epsilon_{uv})$} of a graph $G$, which is introduced in \cite{ran1, ecc-main} and further studied in \cite{ ours1, ran1, ecc-main}, is defined as
$$\epsilon_{uv}=
\begin{cases}
\text{$d_G(u,v)$} & \quad\text{if $d_G(u,v)=min\{e(u),e(v)\}$,}\\
\text{0} & \quad\text{otherwise.}
\end{cases}$$
In \cite{ran1,ran2}, the eccentricity matrix  is  known as $D_{\max}$-matrix. The eigenvalues of the eccentricity matrix of a graph $G$ is called the $\varepsilon$-eigenvalues of $G$. Since $\varepsilon(G)$ is symmetric, all of its eigenvalues are real. Let $\xi_1>\xi_2>\hdots >\xi_k$ be all the distinct $\varepsilon$-eigenvalues of $G$, then the $\varepsilon$-spectrum of $G$ can be written as
 \[ spec_{\varepsilon}(G)=
  \left\{ {\begin{array}{cccc}
   \xi_1 & \xi_2  &\hdots & \xi_k\\
   m_1& m_2& \hdots &m_k\\
  \end{array} } \right\},
  \]
 where $m_i$ be the algebraic multiplicity of $\xi_i$ for $i=1,2,\hdots,k$. The largest eigenvalue of $\varepsilon(G)$ is called the $\varepsilon$-spectral radius and is denoted by $\rho(\varepsilon(G))$.

 It is well-known that  \emph{graph energy} is a vital chemical index in chemical graph theory. The energy ( or $A$-energy ) of a graph is introduced in \cite{gutman-energy}, which is defined as
 $$E_{A}(G)=\sum_{i=1}^n |\lambda_i|,$$
 where $\lambda_i$, $i=1,2,\hdots,n$ are the eigenvalues of the adjacency matrix of $G$. In a similar way, the eccentricity energy (or $\varepsilon$-energy ) of a graph $G$ is defined \cite{wang2019graph} as
$$E_{\varepsilon}(G)=\sum_{i=1}^n |\xi_i|,$$
where $\xi_1,\xi_2,\hdots,\xi_n$ are the $\varepsilon$-eigenvalues of $G$. Two graphs are said to be \emph{$\varepsilon$-cospectral} if they have the same $\varepsilon$-spectrum, and two graphs are said to be  \emph{$\varepsilon$-equienergetic} if they have the same $\varepsilon$-energy. We are, of course,  interested in studying about
$\varepsilon$-equienergetic graphs which are not $\varepsilon$-cospectral.

The \emph{Wiener index} of a graph is an important and well studied topological index in mathematical chemistry. It is defined as
$$W(G)=\frac{1}{2}\sum_{{u,v}\in V(G)}d_{G}(u,v).$$
Similarly, we define the \emph{ eccentric Wiener index} (or $\varepsilon$-Wiener index) of a connected graph $G$ as follows
$$W_{\varepsilon}(G)=\frac{1}{2}\sum_{{u,v}\in V(G)}\epsilon_{uv}.$$
As usual let $K_{1,n-1}$, $P_n$ and $K_n$ denote the star, the path and the complete graph on $n$ vertices, respectively. For other undefined notations and terminology from graph theory, we refer to \cite{bon-mur-book}. We shall use the following results for the proof of our main results.

\begin{lem}\label{lem:comp}
Let $A=(a_{ij})$ and $B=(b_{ij})$ be two $n \times n$ matrices such that $b_{ij}\geq a_{ij}$ for all $i,j$. Then $\rho(B)\geq \rho(A)$.
\end{lem}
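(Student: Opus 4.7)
The plan is to invoke standard Perron--Frobenius machinery, noting first that the result as stated requires (and the paper's applications guarantee) that both matrices are entrywise nonnegative; without such an assumption the claim is false, as the example $A=-I$, $B=0$ shows. In the intended setting $A$ and $B$ will be distance-type matrices with nonnegative integer entries, so the Perron--Frobenius framework applies directly. Throughout what follows I will therefore assume $0 \leq a_{ij} \leq b_{ij}$ for all $i,j$.

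The shortest route is via the Collatz--Wielandt characterization of the spectral radius of a nonnegative matrix. By Perron--Frobenius applied to $B$, there is a nonzero vector $x \geq 0$ with $Bx = \rho(B)\, x$. Since $A \leq B$ entrywise and $x \geq 0$, one obtains $0 \leq Ax \leq Bx = \rho(B)\, x$, and the Collatz--Wielandt inequality then yields $\rho(A) \leq \rho(B)$, as required.

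A second, more pedestrian route uses Gelfand's formula $\rho(M)=\lim_{k\to\infty}\|M^k\|^{1/k}$. A one-line induction shows $0 \leq A^k \leq B^k$ entrywise for every $k \geq 1$: the inductive step $A^{k+1}=A\cdot A^k \leq B\cdot B^k = B^{k+1}$ needs only that sums and products of nonnegative numbers preserve the order. Choosing any monotone matrix norm (for instance the entrywise maximum, or the sum of absolute values of all entries), one gets $\|A^k\| \leq \|B^k\|$, and taking $k$-th roots and passing to the limit gives $\rho(A) \leq \rho(B)$.

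There is no genuine obstacle here---this is a textbook consequence of Perron--Frobenius theory---and the only point that deserves care is the implicit nonnegativity hypothesis, which should arguably be recorded in the statement of the lemma. Either of the two arguments above can be written out in a few lines; I would favour the Collatz--Wielandt version for its brevity.
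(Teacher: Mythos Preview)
Your proposal is correct. Note, however, that the paper does not actually supply a proof of this lemma: it is listed among the preliminary results in the introduction (alongside the interlacing theorem and the quotient-matrix spectrum lemma) and is invoked later as a known tool, so there is no ``paper's own proof'' to compare against. Your observation that the lemma, as literally stated, requires the implicit hypothesis $a_{ij}\geq 0$ is well taken; the paper applies it only to eccentricity and distance matrices, which are nonnegative, so the omission is harmless in context but worth flagging. Both of your arguments---the Collatz--Wielandt route and the Gelfand-formula route---are standard and valid; the second has the minor advantage of not needing any care about reducibility of $B$ or possible zero coordinates in its Perron vector.
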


\begin{lem}\cite[Lemma 2.1]{lin2013distance} \label{lem:star}
The graph $K_{1,n-1}$ is the unique graph, which have maximum distance spectral radius among all graphs with diameter 2.
\end{lem}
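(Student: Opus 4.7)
The core observation is that for any graph $G$ of diameter $2$ on $n$ vertices, the off-diagonal entries of the distance matrix take only the values $1$ (at edges of $G$) or $2$ (at non-edges), so $D(G) = 2(J - I) - A(G)$, where $A(G)$ is the adjacency matrix. In particular, comparing two diameter-$2$ graphs on $n$ vertices amounts to comparing their adjacency patterns: an additional edge strictly decreases the corresponding distance entry from $2$ to $1$.

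The plan has two main steps. First, I would compute $\rho(D(K_{1,n-1}))$ directly from the block structure of $D(K_{1,n-1})$: the $(n-2)$-dimensional subspace of vectors supported on the leaves and orthogonal to the all-ones vector is the eigenspace for the eigenvalue $-2$, while the two-dimensional subspace of vectors that are symmetric in the leaves yields a $2 \times 2$ problem whose larger root equals $(n-2) + \sqrt{n^2 - 3n + 3}$. Second, I would handle the case when $G$ has a universal vertex $v$: then $K_{1,n-1} \subseteq G$ as a spanning subgraph, so $D(G) \leq D(K_{1,n-1})$ entrywise, since adding edges to $K_{1,n-1}$ can only shorten distances. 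Lemma~\ref{lem:comp} then yields $\rho(D(G)) \leq \rho(D(K_{1,n-1}))$; and if $G \neq K_{1,n-1}$, then some edge $uw$ of $G$ with $u,w \neq v$ forces $D(G)_{uw} = 1 < 2 = D(K_{1,n-1})_{uw}$, and irreducibility of the distance matrix of a connected graph together with strict Perron--Frobenius monotonicity gives $\rho(D(G)) < \rho(D(K_{1,n-1}))$.

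The main obstacle is the case when $G$ has no universal vertex, since then $K_{1,n-1}$ is not a subgraph of $G$ and the direct entrywise comparison fails. One natural attempt is a graph-transformation argument: starting from $G$, apply modifications (for instance, Kelmans-type shifts that transfer edges toward a vertex of maximum degree) that weakly increase $\rho(D(G))$ while preserving diameter $2$, iterating until a universal vertex appears and thus reducing to the previous case. The delicate point is that diameter-$2$ graphs without a universal vertex, such as $C_5$ or the Petersen graph, are edge-minimal for the property ``diameter $\leq 2$'', so naive edge-deletions do not suffice, and each admissible shift must be carefully designed to ensure the new distance matrix dominates the old one entrywise so that Lemma~\ref{lem:comp} can be invoked. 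An alternative route is to bound $\rho(D(G))$ directly via the Rayleigh quotient with the Perron eigenvector $x > 0$ of $D(G)$, writing $\rho(D(G)) = 2(\mathbf{1}^T x)^2/\|x\|^2 - 2 - x^T A(G) x/\|x\|^2$, and exploit the structural constraint $\Delta(G) \leq n - 2$ together with the computed value of $\rho(D(K_{1,n-1}))$ to establish the required strict inequality.
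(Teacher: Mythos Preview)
The paper does not give its own proof of this lemma: it is quoted verbatim from \cite{lin2013distance} and used as a black box. So there is no in-paper argument to compare against, and your proposal should be judged on its own.

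Your treatment of the easy case is fine. When $G$ has a universal vertex $v$, relabelling so that $v$ is the centre of $K_{1,n-1}$ gives $D(G)\le D(K_{1,n-1})$ entrywise, and Lemma~\ref{lem:comp} together with the strict Perron--Frobenius inequality for irreducible nonnegative matrices yields $\rho(D(G))<\rho(D(K_{1,n-1}))$ whenever $G\neq K_{1,n-1}$. The computation of $\rho(D(K_{1,n-1}))=(n-2)+\sqrt{n^2-3n+3}$ via the obvious equitable two-block partition is also correct.

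The genuine gap is exactly where you say it is, and you do not close it. For diameter-$2$ graphs with $\Delta(G)\le n-2$ (e.g.\ $C_5$, the Petersen graph, the complement of a perfect matching), no relabelling makes $D(G)\le D(K_{1,n-1})$ entrywise, so Lemma~\ref{lem:comp} is unavailable. Your two suggested routes are only outlines: for the Kelmans-type shift you would need to exhibit, for every diameter-$2$ graph without a universal vertex, an explicit edge move that (i) preserves diameter $2$ and (ii) produces a new distance matrix entrywise dominating the old one, and you give neither the move nor the verification of (i)--(ii). For the Rayleigh-quotient route, the identity $\rho(D(G))=2(\mathbf 1^{T}x)^2-2-x^{T}A(G)x$ (with $\|x\|=1$) is correct, but you do not explain how the constraint $\Delta(G)\le n-2$ alone forces this quantity below $(n-2)+\sqrt{n^2-3n+3}$; the Perron vector $x$ of $D(G)$ is not constant, so bounding $x^{T}A(G)x$ from below is nontrivial. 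As written, the proposal is a plan with the hard step missing rather than a proof.
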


\begin{thm}\cite{hor-john-mat}(Interlacing Theorem)
	Let $A$ be a  symmetric matrix of order $n$ and let $B$ be its principal submatrix of order $m<n$. Suppose  $\lambda_1(A) \leq \lambda_2(A) \hdots \leq \lambda_n(A)$are the eigenvalues of $A$ and  $\beta_1(B) \leq \beta_2(B)  \hdots \leq  \beta_m(B)$ are the eigenvalues of $B$. Then,
	$\lambda_i(A) \leq \beta_i(B) \leq  \lambda_{i+n-m}(A)$ for $i = 1,\hdots , m$,
	and if $ m = n-1$, then
	$\lambda_1(A)\leq \beta_1(B) \leq \lambda_2(A) \leq \beta_2(B) \leq \hdots \leq \beta_{n-1}(B) \leq \lambda_n(A)$.
\end{thm}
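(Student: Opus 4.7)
The plan is to derive both bounds from the Courant--Fischer min-max characterization of eigenvalues of a real symmetric matrix. Recall that
$$\lambda_k(A) = \min_{\substack{S\subseteq \mathbb{R}^n\\ \dim S = k}} \max_{\substack{x\in S\\ x\neq 0}} \frac{x^{T}Ax}{x^{T}x} = \max_{\substack{S\subseteq \mathbb{R}^n\\ \dim S = n-k+1}} \min_{\substack{x\in S\\ x\neq 0}} \frac{x^{T}Ax}{x^{T}x},$$
with the analogous statements for the eigenvalues of $B$ acting on $\mathbb{R}^m$. The strategy is to use these two formulas to compare Rayleigh quotients of $A$ and of $B$ after realizing $\mathbb{R}^m$ as a coordinate subspace of $\mathbb{R}^n$.

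After simultaneously permuting rows and columns of $A$ (which changes neither spectrum), I may assume $B$ is the leading $m\times m$ principal block of $A$. Let $W\subseteq\mathbb{R}^n$ be the subspace of vectors whose last $n-m$ coordinates vanish, and let $\iota\colon\mathbb{R}^m\to W$ be the isometric embedding by appending zeros; then $(\iota y)^{T}A(\iota y)=y^{T}B y$ for every $y\in\mathbb{R}^m$. The one nontrivial linear-algebra input is the subspace-intersection bound $\dim(S\cap W)\geq \dim S + m - n$, valid for any subspace $S\subseteq\mathbb{R}^n$.

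For the upper inequality $\beta_i(B)\leq\lambda_{i+n-m}(A)$, I would take any $S\subseteq\mathbb{R}^n$ with $\dim S = i+n-m$, note that $\dim(S\cap W)\geq i$, transport $S\cap W$ back across $\iota$ to get an $i$-dimensional subspace of $\mathbb{R}^m$, and chain
$$\max_{0\neq x\in S}\frac{x^{T}Ax}{x^{T}x} \;\geq\; \max_{0\neq x\in S\cap W}\frac{x^{T}Ax}{x^{T}x} \;=\; \max_{0\neq y\in \iota^{-1}(S\cap W)}\frac{y^{T}B y}{y^{T}y} \;\geq\; \beta_i(B),$$
the last step by the min-max formula for $\beta_i(B)$. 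Taking the minimum over such $S$ gives $\lambda_{i+n-m}(A)\geq\beta_i(B)$. The companion inequality $\lambda_i(A)\leq\beta_i(B)$ is obtained symmetrically from the dual max-min formula, now with $\dim S = n-i+1$ so that $\dim(S\cap W)\geq m-i+1$; the interlacing statement for $m=n-1$ then follows at once by substituting $n-m=1$ into both ends.

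The only real obstacle will be keeping the dimension bookkeeping consistent with whichever half of the min-max formula is being invoked (especially the off-by-one shift $+n-m$ versus $-i+1$); once the embedding $\iota$ and the intersection bound are in hand, both inequalities reduce to a single chain of Rayleigh-quotient comparisons, and no property of $A$ beyond symmetry is needed.
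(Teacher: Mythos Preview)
Your argument via the Courant--Fischer min--max principle is correct and is precisely the standard proof of Cauchy interlacing. Note, however, that the paper does not supply its own proof of this theorem: it is quoted from \cite{hor-john-mat} as a background result, so there is no in-paper proof to compare against---and the approach you outline is essentially the one found in that reference.
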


\begin{dfn}\cite{hor-john-mat}
 (Equitable partitions) Let $A$ be a real symmetric matrix whose rows and columns are indexed by $X=\{1,2,\hdots,n\}$. Let $\pi=\{X_1,X_2,\hdots,X_m\}$ be a partition of $X$. The characteristic matrix $C$ is the $n\times m$ matrix whose $j$-th column is the characteristic vector of $X_j$ $(j=1,2,\hdots,m)$. Let $A$ be partitioned according to $\pi$ as \[A=\left[ {\begin{array}{cccc}
		A_{11} & A_{12} &\hdots & A_{1m}\\
		A_{21} & A_{22} &\hdots & A_{2m}\\
		\vdots &\hdots & \ddots & \vdots\\
		A_{m1} & A_{m2}& \hdots &A_{mm}\\
		\end{array} } \right],\] 
where $A_{ij}$ denotes the submatrix (block)	 of $A$ formed by rows in $X_i$ and the columns in $X_j$. If $q_{ij}$ denote the average row sum of $A_{ij}$, then the matrix $Q=(q_{i,j})$ is called the quotient matrix of $A$. If the row sum of each block $A_{ij}$ is a constant, then the partition $\pi$ is called equitable partition.
\end{dfn}

\begin{thm}\cite{cvetkovic2009introduction}\label{quo-spec}
 Let $Q$ be a quotient matrix of any square matrix $A$ corresponding to an equitable partition. Then the spectrum of $A$ contains the spectrum of $Q$.
\end{thm}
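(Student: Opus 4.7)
The plan is to use the characteristic matrix $C$ from the preceding definition as the bridge between $A$ and $Q$. First I would establish the fundamental identity $AC = CQ$ for an equitable partition. To see this, fix indices and note that the $(u,j)$-entry of $AC$ equals $\sum_{v \in X_j} A_{uv}$, i.e., the row sum of the block $A_{ij}$ restricted to the row corresponding to $u \in X_i$. By definition of an equitable partition, this row sum is constant over $u \in X_i$ and equals $q_{ij}$. On the other hand, the $(u,j)$-entry of $CQ$ is $\sum_{\ell} C_{u\ell} Q_{\ell j} = q_{ij}$ since $C_{u\ell} = 1$ iff $\ell = i$. Hence $AC = CQ$ entry-wise.

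Next I would transfer eigenvectors from $Q$ to $A$. If $Qx = \lambda x$ for some nonzero $x \in \mathbb{R}^m$, then
\[
A(Cx) \;=\; (AC)x \;=\; (CQ)x \;=\; C(\lambda x) \;=\; \lambda (Cx),
\]
so $Cx$ is an eigenvector of $A$ with eigenvalue $\lambda$, provided $Cx \neq 0$. Because the parts $X_1,\dots,X_m$ are pairwise disjoint, the columns of $C$ are $0/1$ vectors with pairwise disjoint supports and in particular are linearly independent, so $C$ has full column rank $m$; therefore $Cx \neq 0$ whenever $x \neq 0$. This shows every eigenvalue of $Q$ is an eigenvalue of $A$.

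To recover multiplicities (so that the inclusion holds as a multiset), I would take a basis $x_1,\dots,x_m$ of $\mathbb{R}^m$ adapted to the eigenspace decomposition of $Q$ and observe that $Cx_1,\dots,Cx_m$ remain linearly independent by the full-column-rank of $C$. Hence the $\lambda$-eigenspace of $A$ has dimension at least the $\lambda$-eigenspace of $Q$ for every $\lambda$, which is exactly the claim that $\operatorname{spec}(Q) \subseteq \operatorname{spec}(A)$ as multisets. The only real subtlety is verifying the block identity $AC = CQ$ cleanly; once that is in place, the rest is essentially bookkeeping with the injective linear map $x \mapsto Cx$.
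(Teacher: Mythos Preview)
The paper does not prove this statement; it is quoted from \cite{cvetkovic2009introduction} as a preliminary, so there is no in-paper argument to compare against. Your proof via the intertwining relation $AC=CQ$ is exactly the standard one and is correct.

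One small technical point is worth flagging. In the multiplicity step you take an eigenbasis of $Q$, but the theorem as phrased allows an arbitrary square $A$, and $Q$ need not be diagonalizable a priori (even when $A$ is symmetric, $Q$ itself is typically not symmetric). The tidy fix is to note that $AC=CQ$ says precisely that the column space of $C$ is an $A$-invariant subspace on which $A$ acts, in the basis given by the columns of $C$, by the matrix $Q$; hence $\det(xI-Q)$ divides $\det(xI-A)$, which yields the multiset inclusion of spectra with no diagonalizability hypothesis. Alternatively, in the paper's actual setting $A=\varepsilon(G)$ is symmetric, and then with $D=C^{T}C=\operatorname{diag}(|X_1|,\dots,|X_m|)$ one has $Q=D^{-1}C^{T}AC$, so $D^{1/2}QD^{-1/2}=D^{-1/2}C^{T}ACD^{-1/2}$ is symmetric; thus $Q$ is diagonalizable and your argument goes through verbatim.
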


This article is organized as follows: In section $2$, we show that the eccentricity matrix of a tree, other than $P_4$, is invertible if and only if it is the star. In section $3$, we obtain bounds for $\varepsilon$-spectral radius of graphs and characterize the extreme graphs. In section $4$, we construct a pair of non-cospectral $\varepsilon$-equienergetic graphs.

\section{A characterization of star graph}
In this section, we prove that among all trees,  other than $P_4$, star is the only graph for which the eccentricity matrix is always invertible.
\begin{thm}
Let $T$ be a tree, other than $P_4$, then the eccentricity matrix of $T$ is invertible if and only if $T$ is the star.
\end{thm}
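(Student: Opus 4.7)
The plan is to handle the two directions separately. For the \emph{sufficiency} direction, if $T=K_{1,n-1}$ then the center has eccentricity $1$ and each leaf has eccentricity $2$; a direct check gives $\epsilon_{uv}=1$ between the center and any leaf and $\epsilon_{uv}=2$ between any two distinct leaves. Thus $\varepsilon(K_{1,n-1})=\bigl(\begin{smallmatrix} 0 & \mathbf{1}^T \\ \mathbf{1} & 2(J-I)\end{smallmatrix}\bigr)$, whose spectrum can be read off from the equitable partition into center and leaves: the quotient matrix $\bigl(\begin{smallmatrix} 0 & n-1 \\ 1 & 2(n-2)\end{smallmatrix}\bigr)$ supplies two eigenvalues whose product is $-(n-1)$, while every vector that is zero at the center and sums to zero on the leaves is an eigenvector with eigenvalue $-2$, giving multiplicity $n-2$. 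Hence $\det\varepsilon(K_{1,n-1})=-(n-1)(-2)^{n-2}\neq 0$.

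For the \emph{necessity} direction, I prove the contrapositive: if $T$ is a tree with $T\neq P_4$ and $T\neq K_{1,n-1}$, then $\varepsilon(T)$ is singular. The structural observation driving the argument is that in a tree every eccentric vertex is a leaf (otherwise the realizing path could be extended by one more edge) and, by the standard two-sweep fact, every eccentric vertex further belongs to the set $D$ of \emph{diametral} vertices (those with $e(v)=\mathrm{diam}(T)$). Consequently, for any non-leaf vertex $v$, the condition $\epsilon_{vu}\neq 0$ forces $u\in D$: $u$ must be eccentric to $v$, since $v$ being non-leaf rules out $v$ ever being eccentric to someone. So every non-leaf row of $\varepsilon(T)$ is supported inside $D$.

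I then split into two cases. \emph{Case 1: $T$ has twin leaves $u,v$} (two leaves sharing a common neighbor $w$). Because $T\neq K_{1,n-1}$, the vertex $w$ has a non-leaf neighbor, so $e(u)=e(v)\geq 3$ and therefore $\epsilon_{uv}=0$. Together with the identity $\epsilon_{ux}=\epsilon_{vx}$ for every $x\notin\{u,v\}$ (which follows from $d(u,x)=d(v,x)$ and $e(u)=e(v)$), the rows indexed by $u$ and $v$ of $\varepsilon(T)$ coincide, so $\varepsilon(T)$ is singular. \emph{Case 2: $T$ has no twin leaves}, so each non-leaf is adjacent to at most one leaf and thus $k\leq n-k$, where $k$ denotes the number of leaves. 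Combined with $|D|\leq k$, this gives $|D|\leq k\leq n-k$. The $n-k$ non-leaf rows of $\varepsilon(T)$ live in the $|D|$-dimensional coordinate subspace indexed by $D$, so they are linearly dependent whenever $|D|<n-k$. The only remaining possibility is the tight chain of equalities $|D|=k=n-k$: then $n=2k$, every leaf is diametral, and leaves are in bijection with non-leaves (each non-leaf having exactly one leaf neighbor). Using $e_T(u)=1+e_T(w_u)$ for a leaf $u$ with neighbor $w_u$, the requirement that all leaves have eccentricity $\mathrm{diam}(T)$ forces every non-leaf to share a common eccentricity, so $T':=T-\{\text{leaves of }T\}$ is self-centered. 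Since the only self-centered trees are $K_1$ and $K_2$, one has $|V(T')|\leq 2$, forcing $T=K_2$ (a star) or $T=P_4$, both excluded.

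The step I expect to be the most delicate is the boundary subcase of Case~2, where the classification of self-centered trees must be invoked to collapse the remaining possibilities onto $P_4$. Everything else, namely identifying the support of the non-leaf rows and producing the identical rows from twin leaves, is routine once the tree-structural facts about eccentric vertices have been observed.
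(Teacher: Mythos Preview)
Your proof is correct and takes a genuinely different, more structural route than the paper's. The paper fixes a diametral path $v_1\cdots v_m$ and argues case by case: first on whether $v_2$ or $v_{m-1}$ carries an extra pendant (producing identical rows, essentially your Case~1 restricted to those two positions), and then, when neither does, it splits further according to whether $\mathrm{diam}(T)$ equals $3$, $4$, or is at least $5$, in each subcase exhibiting by hand a specific linear dependence among a few explicitly named rows. Your argument replaces all of this with a single counting observation: because in a tree every eccentric vertex is a diametral leaf, each non-leaf row of $\varepsilon(T)$ is supported on the set $D$ of diametral leaves, so the $n-k$ non-leaf rows lie in a $|D|$-dimensional coordinate subspace and are automatically dependent unless $|D|=k=n-k$; that residual configuration is then eliminated via the classification of self-centered trees. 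This is cleaner than the paper's diameter-by-diameter analysis and makes transparent exactly why $P_4$ is the lone exception.

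One small point to tighten: the inference from ``all non-leaves share the same eccentricity in $T$'' to ``$T'=T-\{\text{leaves}\}$ is self-centered'' is not immediate, since self-centeredness refers to eccentricities computed \emph{in $T'$}. You need the extra line that $e_{T'}(w)=e_T(w)-1$ for every $w\in T'$, which follows because each non-leaf has exactly one leaf neighbour in your boundary subcase, so passing to $T'$ drops every eccentricity by exactly one. Alternatively, you can bypass $T'$ entirely: the center of $T$ is a non-leaf with eccentricity $\lceil d/2\rceil$, so $\lceil d/2\rceil=d-1$ forces $d\le 3$, and a diameter-$3$ tree without twin leaves is precisely $P_4$.
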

\begin{proof}
Let $T$ be the star on $n$ vertices. As the distance matrix and the eccentricity matrix of the star are same, so $\det(\varepsilon(T)=(-1)^{n-1}(n-1)2^{n-2}$. Thus $\varepsilon(T)$ is invertible.

To prove the converse, first, let us consider the trees of order up to $4$. For $n = 2, 3$, the proof is trivial.  For $n=4$,  $P_4$ and $K_{1,3}$ are the only trees of order $4$, and the eccentricity matrix of both the trees are invertible.
	
Let $T$ be a tree on $n \geq 5$ vertices other than the star. We will show that $det(\varepsilon(T))=0$. Let $P(v_1,v_m)=v_1v_2\hdots v_{m-1}v_m$ be a diametrical path of length $m-1$ in $T$. Now consider the following two cases:\\
\textbf{Case(I)}: Let either $v_2$ or $v_{m-1}$ be adjacent to at least one pendant vertex other than the vertices $v_1$ and $v_m$. Without loss of generality, assume that $v_{m-1}$ is adjacent to $p$ pendant vertices, say, $u_1,u_2,\hdots,u_p$. Then the rows corresponding to the vertices $u_1,u_2,\hdots,u_p$ and $v_m$ are the same in $\varepsilon(T)$. Thus $det(\varepsilon(T))=0$.\\
\textbf{Case(II)}: Let both the vertices $v_2$ and $v_{m-1}$ are not  adjacent to any of the pendant vertices in $G$ other than $v_1$ and $v_m$, respectively. Since $T$ is a tree other than the star, so $diam(T)\geq 3$. If $T$ is a tree on $n\geq 5$ vertices and $diam(T)=3$, then one of the vertices $v_2$ or $v_{m-1}$ must be adjacent to at least  two pendent vertices, and the proof follows from case(I). Let $diam(T)\geq 4$. Let us show that at least two rows of $\varepsilon(T)$ are linearly dependent. Now we consider the following two subcases:\\
Subcase(I): Let $diam(T)=4$, and let  $P(v_1,v_5)=v_1v_2v_3 v_4v_5$ be a diametrical path in $T$.  Let $u_1,u_2,\hdots,u_p$ be the vertices, other than $v_1$ and $v_5$, such that each $u_i$ has exactly one common neighbour, say  $w_i$, with $v_3$. It is easy to see that, the vertices  $u_1,u_2,\hdots,u_p$ are pendant.  The rows corresponding to the vertices $w_1,w_2,\hdots,w_p,v_2,v_4$ and the row corresponding to the vertex $v_3$, in $\varepsilon(T)$, are linearly dependent .\\
Subcase(II): Let $diam(T)\geq 5$, and let $P(v_1,v_m)=v_1v_2v_3\hdots v_{m-1}v_m$ be a diametrical path in $T$. Then the rows corresponding to the vertices $v_2$ and $v_3$ are linearly dependent in  $\varepsilon(T)$.

Thus $\det(\varepsilon(T)) =0$ in all the above cases. Therefore, if the eccentricity matrix of $T$ is invertible, then $T$ is the star.
\end{proof}

\section{Bounds for $\varepsilon$-spectral radius of graphs}
In this section, we  establish bounds for the $\varepsilon$-spectral radius of graphs, and characterize the extreme graphs. In the next theorem,  we derive a characterization for the star, among all connected graphs with diameter $2$, in terms of the $\varepsilon$-spectral radius.
\begin{thm}
Among all connected  graphs on $n$ vertices with diameter $2$, the star $K_{1,n-1}$ is the unique graph, which has maximum $\varepsilon$-spectral radius.
\end{thm}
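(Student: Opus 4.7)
The plan is to reduce the problem to the corresponding statement for the distance matrix, which is already known (Lemma \ref{lem:star}). The key observation is that for any graph $G$ of diameter $2$, the eccentricity matrix is entrywise dominated by the distance matrix, and for the star the two matrices coincide.

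First I would record the diameter-$2$ structure of $\varepsilon(G)$. Every vertex has eccentricity $1$ or $2$. When $\min\{e(u),e(v)\}=2$ the entry $\varepsilon(G)_{uv}$ is either $d(u,v)$ or $0$, and when $\min\{e(u),e(v)\}=1$ one of $u,v$ is universal, so $d(u,v)=1$ is retained. In particular, $0\le \varepsilon(G)_{uv}\le d(u,v)$ for every pair, i.e.\ $\varepsilon(G)\le D(G)$ entrywise, with both matrices nonnegative. Then Lemma \ref{lem:comp} yields
\[
\rho(\varepsilon(G))\le \rho(D(G)).
\]

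Next, for the star $K_{1,n-1}$ the center has eccentricity $1$ and every leaf has eccentricity $2$, so every off-diagonal entry of $D(K_{1,n-1})$ satisfies the equality $d(u,v)=\min\{e(u),e(v)\}$; hence $\varepsilon(K_{1,n-1})=D(K_{1,n-1})$ and in particular
\[
\rho(\varepsilon(K_{1,n-1}))=\rho(D(K_{1,n-1})).
\]
Combining this with Lemma \ref{lem:star}, which says $\rho(D(G))\le \rho(D(K_{1,n-1}))$ for every diameter-$2$ graph $G$ and equality characterizes the star, I obtain
\[
\rho(\varepsilon(G))\le \rho(D(G))\le \rho(D(K_{1,n-1}))=\rho(\varepsilon(K_{1,n-1})),
\]
which gives the asserted bound.

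For uniqueness, if $G$ is a connected diameter-$2$ graph with $G\ne K_{1,n-1}$, then Lemma \ref{lem:star} gives the strict inequality $\rho(D(G))<\rho(D(K_{1,n-1}))$, so the chain above yields $\rho(\varepsilon(G))<\rho(\varepsilon(K_{1,n-1}))$. I do not expect a serious obstacle here: the only subtlety is making sure that Lemma \ref{lem:comp} applies, which it does because both $\varepsilon(G)$ and $D(G)$ are nonnegative symmetric matrices; the uniqueness comes entirely from the known uniqueness in Lemma \ref{lem:star} and does not require a strict version of the comparison lemma.
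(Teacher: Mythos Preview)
Your proof is correct and follows essentially the same route as the paper: compare $\varepsilon(G)$ entrywise with $D(G)$, apply Lemma~\ref{lem:comp}, note that $\varepsilon(K_{1,n-1})=D(K_{1,n-1})$, and invoke Lemma~\ref{lem:star} for both the bound and the uniqueness. Your write-up simply spells out in a bit more detail why the entrywise domination and the equality for the star hold.
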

\begin{proof} Let $G$ be a connected graph on $n$ vertices such that $diam(G)=2$. From the definition, it follows that the eccentricity matrix  $\varepsilon(G)$ of $G$ is entrywise dominated by the distance matrix $D(G)$. So by Lemma \ref{lem:comp}, $\rho(\varepsilon(G))\leq \rho(D(G))$. For $K_{1,n-1}$, the star on $n$ vertices,  the eccentricity matrix and the distance matrix are the same, and hence $\rho(D(K_{1,n-1}))=\rho(\varepsilon(K_{1,n-1}))$. By Lemma \ref{lem:star},  $\rho(D(G))\leq \rho(D(K_{1,n-1}))=(n-2)+\sqrt{n^2-3n+3}$, and the equality holds if and only if $G$ is the star. Therefore, $\rho(\varepsilon(G))\leq \rho(D(G))\leq \rho(D(K_{1,n-1}))=\rho(\varepsilon(K_{1,n-1}))$, and the equality holds only for the star.
\end{proof}

Next we establish an lower bound for the $\varepsilon$-spectral radius of a graph with given diameter, and  characterize the extreme graph.
\begin{thm}\label{prop:diam}
If $G$ is a connected graph with diameter $d\geq 2$, then $\rho(\varepsilon(G))\geq d$, and the equality holds if and only if $G$ is the diametrical graph with diameter $d$.
\end{thm}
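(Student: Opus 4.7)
The plan is to establish the lower bound by interlacing and then handle the two directions of the equality characterization separately.

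For the lower bound, I would pick any vertex $u$ with $e(u)=d$ (one exists since the diameter $d$ is achieved) together with any eccentric vertex $u'$ of $u$. Since $d(u,u')=d=e(u)$ forces $e(u')=d$ as well, we have $\epsilon_{uu'}=\min\{e(u),e(u')\}=d$. The $2\times 2$ principal submatrix of $\varepsilon(G)$ on $\{u,u'\}$ has zero diagonal and off-diagonal entry $d$, so its largest eigenvalue is $d$; the Interlacing Theorem then gives $\rho(\varepsilon(G))\geq d$.

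For the $(\Leftarrow)$ direction of the equality, suppose $G$ is diametrical of diameter $d$. Then every vertex has eccentricity $d$ and a unique eccentric vertex, and the definition of $\varepsilon(G)$ forces $\epsilon_{uv}=d$ precisely when $(u,v)$ is a diametrical pair and $0$ otherwise. Hence $\varepsilon(G)=dP$, where $P$ is the permutation matrix of the fixed-point-free involution pairing each vertex with its partner. Since $P$ has spectrum $\{\pm 1\}$, the spectrum of $\varepsilon(G)$ is $\{\pm d\}$, so $\rho(\varepsilon(G))=d$.

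For the $(\Rightarrow)$ direction, assume $\rho(\varepsilon(G))=d$ and proceed in two steps. First, for any $u$ with $e(u)=d$ and any eccentric vertex $u'$ of $u$, the chain $\rho^2 \geq (\varepsilon(G)^2)_{uu}=\sum_v \epsilon_{uv}^2 \geq \epsilon_{uu'}^2=d^2$ must be an equality, forcing $\epsilon_{uv}=0$ for every $v\neq u'$; so $u'$ is the unique eccentric vertex of $u$ (and $e(u')=d$). Second, suppose for contradiction that some vertex $z$ has $e(z)<d$. Since $G$ is connected and at least one vertex has eccentricity $d$, we can find an edge $uz$ in $G$ with $e(u)=d$ and $e(z)<d$. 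Let $u'$ be the partner of $u$ obtained from the first step; applying the first step to $u'$ gives $\epsilon_{u'z}=0$. Because $\min\{e(u'),e(z)\}=e(z)$, this forces $d(u',z)\neq e(z)$, and combined with $d(u',z)\leq e(z)$ we obtain $d(u',z)<e(z)$. However, the triangle inequality yields $d(u',z)\geq d(u,u')-d(u,z)=d-1$, so $e(z)>d-1$; by integrality of graph distances $e(z)\geq d$, contradicting $e(z)<d$. Hence every vertex has eccentricity $d$ and a unique eccentric vertex, so $G$ is diametrical.

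The hard part will be the second step of the converse: one must first establish uniqueness of partners among diametrical vertices (via the row-norm argument) so that it can then be applied to the partner $u'$ rather than to $u$ itself, then invoke connectedness of $G$ to extract the crossing edge $uz$, and finally combine the identity $\epsilon_{u'z}=0$ with the triangle inequality to close the contradiction.
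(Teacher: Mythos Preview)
Your proof is correct, and the lower bound and the $(\Leftarrow)$ direction match the paper essentially verbatim (the paper writes $\varepsilon(G)$ as $\begin{bmatrix}0 & dI_k\\ dI_k & 0\end{bmatrix}$, which is your $dP$ after a relabelling).

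The $(\Rightarrow)$ direction, however, is handled differently. The paper argues by contradiction via a case split on whether $rad(G)=diam(G)$: in the self-centered case it exhibits a vertex with two eccentric vertices and uses the $3\times 3$ principal submatrix $\begin{bmatrix}0&d&d\\ d&0&0\\ d&0&0\end{bmatrix}$ (spectral radius $d\sqrt{2}$); in the other case it locates adjacent vertices $u,v$ with $e(u)=d$, $e(v)=d-1$ and an eccentric vertex $w$ of $u$, producing the principal submatrix $\begin{bmatrix}0&0&d\\ 0&0&d-1\\ d&d-1&0\end{bmatrix}$ (spectral radius $\sqrt{d^2+(d-1)^2}$). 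Interlacing then forces $\rho(\varepsilon(G))>d$ in both cases. Your route instead uses the Rayleigh/row-norm inequality $\rho^2\geq (\varepsilon(G)^2)_{uu}=\sum_v \epsilon_{uv}^2$ to pin down the entire row of any diametrical vertex at once, and then a single connectivity plus triangle-inequality argument to rule out vertices of smaller eccentricity. This avoids the case analysis and the ad~hoc $3\times 3$ blocks, and as a bonus sidesteps a small wrinkle in the paper's second case (when $d=2$ the entry $\epsilon_{uv}$ is $1$ rather than $0$, so the displayed $C$ is not literally the principal submatrix, though the conclusion still holds). The paper's approach, on the other hand, stays entirely within the interlacing framework already set up and requires no squaring of the matrix.
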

\begin{proof}
 Let $G$  be a connected graph with diameter $d\geq 2$. Then there exists a $2 \times 2$ principal submatrix
 $ \left [ {\begin{array}{cc}
 0 & d\\
 d & 0\\
\end{array} } \right ]$
 whose eigenvalues are $ d,-d$. Thus, by interlacing theorem, we have  $\rho(\varepsilon(G))\geq d$.

Let $G$ be a diametrical graph with diameter $d$. Then for each vertex $v$ of $G$, the eccentricity $e(v)=diam(G)=d$, and the eccentricity attains for a unique vertex. So the eccentricity matrix of $G$ can be written as   $ \left [ {\begin{array}{cc}
 0 & dI_k\\
 dI_k & 0\\
\end{array} } \right ]$, whose $\varepsilon$-spectrum is  $ \left \{ {\begin{array}{cc}
 d & -d\\
 k & k\\
\end{array} } \right \}$. Thus $\rho(\varepsilon(G))=d$.

Conversely, let $\rho(\varepsilon(G))=d$. Suppose $G$ is not the diametrical graph. Then we have the following cases:\\
\textbf{Case(I):} Let $G$ be a graph such that $rad(G)=diam(G)=d$. Then $$B = \left[ {\begin{array}{ccc}
0 & d & d\\
d & 0 & 0\\
d & 0 & 0 \\
\end{array} } \right]$$ is a principal submatrix of $\varepsilon(G)$, and $\rho(B) = d\sqrt{2}$. Therefore, by interlacing theorem, we have $\rho(\varepsilon(G)) \geq d \sqrt{2}>d$, which is not possible.\\
\textbf{Case(II):} Let $G$ be a graph such that $rad(G) \neq diam(G)=d$. Then there exists a vertex $v_k$ with eccentricity $e(v_k)=k<d$. Let $v_1$ be a vertex of $G$ with $e(v_1)=d$. Since $G$ is a connected graph, there is a path $P(v_1,v_k)$ between the vertices $v_1$ and $v_k$. It is easy to see that the eccentricity of any vertex which is adjacent to $v_1$ is either $d$ or $d-1$. Hence, in  the path $P(v_1,v_k)$ there always exists a pair of adjacent vertices $u$ and $v$ such that $e(u)=d$ and $e(v)=d-1$. Let $w$ be the eccentric vertex of $u$, that is, $d(u,w)=d$. Then $d(v,w)=d-1$. Since $e(v)=d-1$ and $w$ is an eccentric vertex of $v$, the $vw$-th entry of $\varepsilon(G)$ is $d-1$. Therefore,
$$C = \left[ {\begin{array}{ccc}
0 & 0 & d\\
0 & 0 & d-1\\
d & d-1 & 0 \\
\end{array} } \right]$$ is a principal submatrix of $\varepsilon(G)$, corresponding to the vertices $u,v$ and $w$. Now, since $\rho(C)$ equals to $\sqrt{(d-1)^2+d^2}$, by interlacing theorem, we have $\rho(\varepsilon(G)) \geq \sqrt{(d-1)^2+d^2}>d$, which is a contradiction.

Therefore, $G$ is a diametrical graph. This completes the proof.
\end{proof}

\begin{cor}
Among the connected bipartite graphs on $2n$ $(n\geq 3)$ vertices, the graph $W_{n,n}$ has the minimum $\varepsilon$-spectral radius, where  $W_{n,n}$ is the graph obtained by deleting $n$ independent edges from the complete bipartite graph $K_{n,n}$.
\end{cor}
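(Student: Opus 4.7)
The plan is to exploit Theorem~\ref{prop:diam}, which asserts $\rho(\varepsilon(G)) \geq diam(G)$ with equality precisely when $G$ is diametrical. The strategy is first to show $\rho(\varepsilon(W_{n,n})) = 3$ by verifying that $W_{n,n}$ is diametrical of diameter $3$, and then to check that any other connected bipartite graph on $2n$ vertices has $\varepsilon$-spectral radius at least $3$.

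I would label the bipartition of $W_{n,n}$ as $A = \{a_1, \dots, a_n\}$ and $B = \{b_1, \dots, b_n\}$, with $a_i b_i$ the deleted edges. Direct distance calculations give $d(a_i, b_j) = 1$ for $i \neq j$; $d(a_i, a_j) = d(b_i, b_j) = 2$ (using $n \geq 3$ to pick a common neighbour in the opposite part); and $d(a_i, b_i) = 3$, where bipartiteness excludes distance $2$ and the path $a_i - b_k - a_l - b_i$ (with $k, l \notin \{i\}$ and $k \neq l$) realises distance $3$. Hence each vertex has its matching partner as its unique diametrical vertex, so $W_{n,n}$ is a diametrical graph of diameter $3$, and Theorem~\ref{prop:diam} yields $\rho(\varepsilon(W_{n,n})) = 3$.

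For the lower bound, let $G$ be an arbitrary connected bipartite graph on $2n$ vertices. If $diam(G) \geq 3$, Theorem~\ref{prop:diam} immediately gives $\rho(\varepsilon(G)) \geq 3$. If $diam(G) = 2$, then every pair of vertices in opposite parts must be adjacent (a non-adjacent opposite-part pair sits at odd distance $\geq 3$), so $G = K_{p,q}$ with $p + q = 2n$. When $p, q \geq 2$, a block computation gives $\varepsilon(K_{p,q}) = 2(J_p - I_p) \oplus 2(J_q - I_q)$, hence $\rho(\varepsilon(G)) = 2(\max\{p, q\} - 1) \geq 2(n-1) \geq 4$. When $\min\{p, q\} = 1$, $G$ is a star on $2n$ vertices and $\varepsilon(G) = D(G)$, so by Lemma~\ref{lem:star} $\rho(\varepsilon(G)) = (2n-2) + \sqrt{4n^2 - 6n + 3} > 3$. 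In every case $\rho(\varepsilon(G)) \geq 3 = \rho(\varepsilon(W_{n,n}))$, so $W_{n,n}$ attains the minimum.

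The whole argument is a short case split fed into Theorem~\ref{prop:diam}; no step is a genuine obstacle. The one place requiring a modest check is the diametricity of $W_{n,n}$, which is precisely where the hypothesis $n \geq 3$ enters to produce the length-three path realising $d(a_i, b_i) = 3$.
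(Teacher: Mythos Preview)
Your proof is correct and follows the same route as the paper: verify that $W_{n,n}$ is diametrical of diameter $3$ so that Theorem~\ref{prop:diam} gives $\rho(\varepsilon(W_{n,n}))=3$, and then dispose of the diameter-$2$ bipartite graphs separately. In fact your treatment of the diameter-$2$ case is more careful than the paper's: the paper asserts that $K_{1,2n-1}$ and $K_{n,n}$ are the \emph{only} bipartite graphs on $2n$ vertices of diameter $2$ and checks just those two, whereas you correctly observe that every $K_{p,q}$ with $p+q=2n$ has diameter $2$ and handle the full range $2\le p\le 2n-2$ via $\rho(\varepsilon(K_{p,q}))=2(\max\{p,q\}-1)\ge 2(n-1)\ge 4$.
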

\begin{proof}
Since  $W_{n,n}$ is obtained by deleting $n$ independent edges from $K_{n,n}$, each vertex of $W_{n,n}$ has a unique diametrical vertex with eccentricity 3. Therefore, $W_{n,n}$ is a diametrical graph with diameter 3. So, by Theorem \ref{prop:diam}, we have $\rho(\varepsilon(W_{n,n}))=3$.

Among the bipartite graphs on $2n$ vertices, $K_{1,2n-1}$ and $K_{n,n}$ are the only graphs of diameter 2 and $\rho(\varepsilon(K_{1,2n-1}))=2(n-1)+\sqrt{4n^2-6n+3} \geq 3$, $\rho(\varepsilon(K_{n,n}))=2(n-1)\geq 3$. Therefore, the proof follows from Theorem \ref{prop:diam}.
\end{proof}
The $\varepsilon$-degree of a vertex $v_i\in V(G)$ is defined as $\varepsilon(i)=\sum_{j=1}^n\epsilon_{ij}$. A graph $G$ is said to be $\varepsilon$-regular if $\varepsilon(i)=k$ for all $i$ \cite{wang2019graph}. Now let us establish a lower bound for the $\varepsilon$-spectral radius of a graph in terms of eccentric Wiener index.

\begin{thm}\label{bou-wie}
Let $G$ be a connected graph on $n$ vertices with eccentric Wiener index $W_{\varepsilon}$. Then $\rho(\varepsilon(G))\geq \frac{2W_{\varepsilon}}{n}$ and the equality holds if and only if $G$ is $ \varepsilon$-regular graph.
\end{thm}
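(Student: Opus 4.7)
The plan is to apply the Rayleigh--Ritz variational characterization of the largest eigenvalue using the all-ones test vector $\mathbf{1}\in\mathbb{R}^n$, which is natural here because its associated quadratic form sums all entries of $\varepsilon(G)$, and that sum is essentially the definition of $W_\varepsilon$. Since $\varepsilon(G)$ is real symmetric and $\rho(\varepsilon(G))$ is defined in the paper as its largest eigenvalue, Rayleigh--Ritz gives
\[
\rho(\varepsilon(G)) \;\geq\; \frac{\mathbf{1}^{T}\varepsilon(G)\mathbf{1}}{\mathbf{1}^{T}\mathbf{1}}.
\]
A direct computation shows the numerator equals $\sum_{u,v\in V(G)}\epsilon_{uv}=2W_{\varepsilon}$ and the denominator equals $n$, which delivers the stated bound $\rho(\varepsilon(G))\geq 2W_{\varepsilon}/n$ immediately.

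For the equality characterization I would use the standard fact that equality in Rayleigh--Ritz holds iff the test vector is a top eigenvector. If equality holds here, then $\varepsilon(G)\mathbf{1}=(2W_{\varepsilon}/n)\mathbf{1}$, which says every $\varepsilon$-degree $\varepsilon(i)=\sum_j\epsilon_{ij}$ equals the common value $2W_{\varepsilon}/n$; in other words, $G$ is $\varepsilon$-regular. Conversely, if $G$ is $\varepsilon$-regular with common $\varepsilon$-degree $k$, then summing the identities $\varepsilon(i)=k$ over $i$ gives $nk=2W_\varepsilon$, so $k=2W_\varepsilon/n$, and $\mathbf{1}$ is an eigenvector of $\varepsilon(G)$ with eigenvalue $k$. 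Because $\varepsilon(G)$ has nonnegative entries and constant row sum $k$, the standard maximum-row-sum bound gives $\rho(\varepsilon(G))\leq k$, and combined with the lower bound this forces $\rho(\varepsilon(G))=k=2W_{\varepsilon}/n$, closing the equality case.

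The argument is essentially routine, and I do not anticipate a serious obstacle. The only point deserving a little care is the converse half of the equality case: one must verify that the eigenvalue $k$ witnessed by $\mathbf{1}$ is in fact the \emph{largest} eigenvalue and not merely some eigenvalue of $\varepsilon(G)$. This is handled at once by the entrywise nonnegativity of $\varepsilon(G)$ together with the max-row-sum upper bound on the spectral radius, which can alternatively be obtained by comparison with the nonnegative majorant $k J/n$ via Lemma \ref{lem:comp}.
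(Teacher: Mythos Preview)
Your proof is correct and follows essentially the same route as the paper's: both apply the Rayleigh quotient to the (normalized) all-ones vector to obtain the bound, and handle the equality characterization via the equality case of Rayleigh--Ritz together with the fact that a nonnegative matrix with constant row sums has spectral radius equal to that row sum. One small caveat: your parenthetical alternative of invoking Lemma~\ref{lem:comp} with the ``majorant'' $kJ/n$ does not actually work, since $\varepsilon(G)$ is not in general entrywise dominated by $kJ/n$ (individual entries $\epsilon_{ij}$ can exceed $k/n$); your primary max-row-sum argument, however, is perfectly fine and in fact spells out a step the paper leaves implicit.
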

\begin{proof}
Let $x=\frac{1}{\sqrt{n}}[1,1,\hdots,1]^T$ be the unit positive vector of order $n$.  By applying Rayleigh Principle to the eccentricity matrix $\varepsilon(G)$ of the graph $G$, we get
\begin{eqnarray*}
\rho(\varepsilon(G)) \geq \frac{x^T\varepsilon(G)x}{x^Tx} &=&\frac{1}{\sqrt{n}}[1,1,\hdots,1] \frac{1}{\sqrt{n}}[\varepsilon(1),\varepsilon(2),\hdots,\varepsilon(n)]^T\\
&=& \frac{1}{n}\sum_{i=1}^n \varepsilon(i)\\
&=& \frac{2W_{\varepsilon}}{n}.
\end{eqnarray*}
Now, if $G$ is $\varepsilon$-regular, then each row sum of $\varepsilon(G)$ is a constant, say $k$ and hence $\rho(\varepsilon(G))=k$. Therefore, $\rho(\varepsilon(G))=k=\frac{nk}{n}= \frac{2W_{\varepsilon}}{n}$, and hence the equality holds.

Conversely if equality holds, then $x$ is an eigenvector corresponding to $\rho(\varepsilon(G))$ and hence $\varepsilon(G)x=\rho(\varepsilon(G))x$. Therefore, $\varepsilon(i)=\rho(\varepsilon(G))$ for all $i$. Thus $G$ is $\varepsilon$-regular. This completes the proof.
\end{proof}

\begin{cor}
Let $G$ be a connected graph on $n$ vertices and $m$ edges with diameter $2$. If $G$ has $k$ vertices of degree $n-1$, then
\begin{equation}\label{eq1}
\rho(\varepsilon(G)) \geq \frac{2(n^2-n-2m)+k(2n-k-1)}{n}.
\end{equation}
\end{cor}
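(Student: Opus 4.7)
The plan is to apply Theorem \ref{bou-wie}, which gives $\rho(\varepsilon(G))\ge \tfrac{2W_{\varepsilon}(G)}{n}$, and then evaluate $2W_\varepsilon(G)$ explicitly using the diameter-$2$ hypothesis. Thus the whole task reduces to a combinatorial identity showing $2W_\varepsilon(G)=2(n^2-n-2m)+k(2n-k-1)$.

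First I would make the structural observation that since $\mathrm{diam}(G)=2$, every vertex has eccentricity $1$ or $2$, and a vertex has eccentricity $1$ if and only if it has degree $n-1$. Hence the $k$ vertices of degree $n-1$ are exactly the eccentricity-$1$ vertices, and the remaining $n-k$ vertices have eccentricity $2$. Then, from the definition of $\varepsilon(G)$, for an unordered pair $\{u,v\}$: if at least one of $u,v$ has eccentricity $1$, then $d(u,v)=1=\min\{e(u),e(v)\}$ and $\epsilon_{uv}=1$; if both $u,v$ have eccentricity $2$, then $\min\{e(u),e(v)\}=2$, so $\epsilon_{uv}=2$ when $d(u,v)=2$ and $\epsilon_{uv}=0$ when $d(u,v)=1$.

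Next I would count. Because every ecc-$1$ vertex is adjacent to all other vertices, the pairs with at least one ecc-$1$ endpoint are exactly edges; their number is $\binom{k}{2}+k(n-k)=\tfrac{k(2n-k-1)}{2}$, each contributing $1$ to $W_\varepsilon$. Conversely, since ecc-$1$ vertices are adjacent to everything, every non-edge of $G$ is a pair of ecc-$2$ vertices at distance $2$; their number is $\binom{n}{2}-m$, each contributing $2$ to $W_\varepsilon$. Adding these up gives
\begin{equation*}
W_\varepsilon(G)=\frac{k(2n-k-1)}{2}+2\bigl(\tbinom{n}{2}-m\bigr)=\frac{k(2n-k-1)}{2}+(n^2-n-2m),
\end{equation*}
so $2W_\varepsilon(G)=k(2n-k-1)+2(n^2-n-2m)$, and inserting this into Theorem \ref{bou-wie} finishes the proof.

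I do not anticipate a real obstacle: the argument is purely bookkeeping once one notices the two-way classification of nonzero entries of $\varepsilon(G)$ (edges incident to a universal vertex vs. non-edges). The only subtle point worth being careful about is verifying that the pairs of ecc-$2$ vertices at distance $2$ coincide with the non-edges of $G$; this is immediate from the observation that any pair involving a degree-$(n-1)$ vertex is an edge, so no non-edge can involve an ecc-$1$ vertex.
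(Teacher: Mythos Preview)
Your proof is correct and follows essentially the same approach as the paper: apply Theorem~\ref{bou-wie} and then evaluate $2W_{\varepsilon}(G)$ for a diameter-$2$ graph using that the eccentricity-$1$ vertices are exactly the universal ones. The only difference is bookkeeping---the paper computes $2W_{\varepsilon}$ by summing the $\varepsilon$-degrees $\varepsilon(i)$ vertex by vertex, whereas you count contributions by unordered pairs (pairs meeting a universal vertex contribute $1$, non-edges contribute $2$); both arrive at the same expression.
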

\begin{proof}
Let $G$ be a connected graph of diameter 2 with vertex set $\{v_1,v_2,\hdots,v_k,v_{k+1},\hdots,v_n\}$, where $v_1,v_2,\hdots,v_k$ are the vertices of degree $n-1$. Therefore, $e(v_i)=1$ for $i=1,2,\hdots,k$ and $e(v_i)=2$ for $i=k+1,\hdots,n$. Then
\begin{eqnarray*}
2W_{\varepsilon}(G)=\sum_{i=1}^n \varepsilon(i) &=& k(n-1)+  \sum_{i=k+1}^n \Big( k+2\big((n-k)-(d_i-k)-1\big)\Big)\\
&=&k(n-1)+  \sum_{i=k+1}^n \big( k+2(n-d_i-1)\big)\\
&=& 2(n^2-n-2m)+k(2n-k-1).
\end{eqnarray*}
Thus the proof follows from Theorem \ref{bou-wie}.
\end{proof}
In the next result, we obtain a lower bound for the $\varepsilon$-spectral radius in terms of $\varepsilon$-Wiener index, and $\varepsilon$-degree sequence. 
\begin{thm}\label{wie-deg}
Let $G$ be a connected graph of order $n$ with $\varepsilon$-Wiener index $W_{\varepsilon}$ and $\varepsilon$-degree sequence $\{\varepsilon(1),\varepsilon(2),\hdots,\varepsilon(n)\}$. Then
$$\rho(\varepsilon(G))\geq \max_{i}\Big\{\frac{1}{n-1}\Big(\big(W_{\varepsilon}-\varepsilon(i)\big)+\sqrt{\big(W_{\varepsilon}-\varepsilon(i)\big)^2+(n-1){\varepsilon}^2(i)}\Big)\Big\}.$$
\end{thm}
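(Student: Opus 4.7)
The plan is to apply the Rayleigh--Ritz principle to $\varepsilon(G)$ with a one-parameter family of test vectors and then recast the resulting estimate as a quadratic inequality for $\rho(\varepsilon(G))$. Fix an index $i\in\{1,\dots,n\}$ and, for a real parameter $a$ to be chosen later, consider the trial vector $x=x(a)\in\mathbb{R}^{n}$ with $x_{i}=a$ and $x_{j}=1$ for every $j\neq i$. Since $\varepsilon(G)$ is symmetric with zero diagonal, splitting $\sum_{j,k}x_{j}x_{k}\epsilon_{jk}$ according to whether either index equals $i$ gives
\begin{equation*}
x^{T}\varepsilon(G)\,x \;=\; 2a\,\varepsilon(i)\;+\;2\bigl(W_{\varepsilon}-\varepsilon(i)\bigr),\qquad x^{T}x \;=\; a^{2}+(n-1),
\end{equation*}
the first term coming from the two off-diagonal blocks incident to vertex $i$, and the second from the identity $\sum_{j,k\neq i}\epsilon_{jk}=2W_{\varepsilon}-2\varepsilon(i)$.

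By the Rayleigh principle, the inequality $\rho(\varepsilon(G))(a^{2}+n-1)\geq 2a\,\varepsilon(i)+2(W_{\varepsilon}-\varepsilon(i))$ holds for every $a\in\mathbb{R}$. Rewriting this as the quadratic in $a$
\begin{equation*}
\rho(\varepsilon(G))\,a^{2}\;-\;2\,\varepsilon(i)\,a\;+\;\bigl[(n-1)\rho(\varepsilon(G))-2(W_{\varepsilon}-\varepsilon(i))\bigr]\;\geq\;0,
\end{equation*}
and noting that $\rho(\varepsilon(G))>0$ (by Theorem \ref{prop:diam}, or directly because $\varepsilon(G)\neq 0$ for any connected graph with $n\geq 2$), the leading coefficient is positive, so the quadratic is globally nonnegative in $a$ if and only if its discriminant is nonpositive. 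A short simplification of this discriminant condition yields
\begin{equation*}
(n-1)\,\rho(\varepsilon(G))^{2}\;-\;2\,\rho(\varepsilon(G))\bigl(W_{\varepsilon}-\varepsilon(i)\bigr)\;-\;\varepsilon(i)^{2}\;\geq\;0.
\end{equation*}

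The final step views this last display as a quadratic in $\rho(\varepsilon(G))$. It has positive leading coefficient $(n-1)$, nonpositive value $-\varepsilon(i)^{2}$ at $0$, and hence one nonnegative and one nonpositive root; since $\rho(\varepsilon(G))\geq 0$, the inequality is equivalent to $\rho(\varepsilon(G))$ being at least the nonnegative root, which is exactly
\begin{equation*}
\frac{1}{n-1}\Bigl((W_{\varepsilon}-\varepsilon(i))+\sqrt{(W_{\varepsilon}-\varepsilon(i))^{2}+(n-1)\,\varepsilon(i)^{2}}\Bigr).
\end{equation*}
Taking the maximum over $i$ gives the claimed bound. I do not anticipate a substantive obstacle: the argument is a textbook Rayleigh--Ritz calculation followed by standard discriminant bookkeeping. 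The one place that warrants care is the expansion of $x^{T}\varepsilon(G)x$, where both the ``local'' quantity $\varepsilon(i)$ and the ``global'' quantity $W_{\varepsilon}$ must emerge with the right numerical coefficients so that the two terms under the square root appear in the correct ratio.
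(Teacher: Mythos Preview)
Your argument is correct. The computation of $x^{T}\varepsilon(G)x$, the passage to the discriminant, and the final quadratic in $\rho$ all check out; the only implicit hypothesis you use is $\rho(\varepsilon(G))>0$, which is immediate for any connected graph on at least two vertices.

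The paper, however, takes a different route. It partitions $V(G)$ as $\{v_i\}\cup (V(G)\setminus\{v_i\})$, writes down the $2\times 2$ quotient matrix
\[
A=\begin{pmatrix}0 & \varepsilon(i)\\[2pt] \dfrac{\varepsilon(i)}{n-1} & \dfrac{2(W_{\varepsilon}-\varepsilon(i))}{n-1}\end{pmatrix},
\]
computes its two eigenvalues explicitly, and then invokes eigenvalue interlacing to conclude that $\rho(\varepsilon(G))\geq\mu_{1}(A)$. The two approaches are close cousins: your test vectors $x(a)$ are exactly the vectors that are constant on the two blocks of this partition, so maximising the Rayleigh quotient over them is equivalent to finding the top eigenvalue of the quotient matrix. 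What your version buys is self-containment---you need only the Rayleigh principle and elementary algebra, not the quotient-matrix interlacing theorem---at the cost of the extra discriminant bookkeeping. The paper's version makes the $2\times 2$ linear-algebra structure transparent and reaches the bound in one stroke, but relies on an external interlacing result (and, as stated, cites the equitable-partition version even though the partition here need not be equitable; the general Haemers interlacing theorem is what is actually required).
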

\begin{proof}
Let $v_i$ be a vertex of the graph $G$ and $\varepsilon(i)$ be its $\varepsilon$-degree. Let us partition the eccentricity matrix of $G$ with respect to the row corresponding to the vertex $v_i$. Then the quotient matrix corresponding to this partition is
\[ A=\left[ {\begin{array}{cc}
		0 & \varepsilon(i)\\
	   \frac{\varepsilon(i)}{n-1} & \frac{2\big(W_{\varepsilon}-\varepsilon(i)\big)}{n-1} \\
\end{array} } \right].\]
The eigenvalues of $A$ are
$$\mu_1=\frac{1}{n-1}\Big\{\big(W_{\varepsilon}-\varepsilon(i)\big)+\sqrt{\big(W_{\varepsilon}-\varepsilon(i)\big)^2+(n-1){\varepsilon}^2(i)}\Big\}$$ and
$$\mu_2=\frac{1}{n-1}\Big\{\big(W_{\varepsilon}-\varepsilon(i)\big)-\sqrt{\big(W_{\varepsilon}-\varepsilon(i)\big)^2+(n-1){\varepsilon}^2(i)}\Big\}.$$
From Lemma \ref{quo-spec}, we have
$$\rho(\varepsilon(G))\geq \mu_1=\frac{1}{n-1}\Big\{\big(W_{\varepsilon}-\varepsilon(i)\big)+\sqrt{\big(W_{\varepsilon}-\varepsilon(i)\big)^2+(n-1){\varepsilon}^2(i)}\Big\}.$$
Since this is true for all $i$, the proof is done.
\end{proof}
\begin{rmk}
	The counterparts of the bounds provided in Theorem \ref{bou-wie} and Theorem  \ref{wie-deg} for the distance matrix case is known in the literature \cite{indu-laa}.
	
	\end{rmk}
\section{Construction of $\varepsilon$-equienergetic graphs}
The problem of constructing non-cospectral equienergetic graphs is  an interesting problem in spectral graph theory. Motivated by this, in this section we discuss  the construction of $\varepsilon$-equienergetic graphs.
\begin{lem}\label{bi-ener}
Let  $K_{p,q}$ be a  complete bipartite graph on $n=p+q$ vertices. If $p, q \geq 2$, then the $\varepsilon$-energy of $K_{p,q}$ is $4(p+q-2)$.
\end{lem}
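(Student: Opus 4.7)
The plan is to exploit the very rigid structure of $K_{p,q}$ when $p,q\geq 2$: every vertex has eccentricity exactly $2$, and the eccentricity matrix becomes block-diagonal with two easily analyzed blocks.

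First I would compute the eccentricities. Fix $u$ in the part of size $p$. Since $p\geq 2$, there is another vertex $u'$ in the same part with $d(u,u')=2$, while every vertex in the other part is at distance $1$. Hence $e(u)=2$, and symmetrically $e(v)=2$ for every vertex $v$ in the part of size $q$. So $\min\{e(u),e(v)\}=2$ for every pair.

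Next I would identify the nonzero entries of $\varepsilon(K_{p,q})$. By definition $\epsilon_{uv}=d(u,v)$ exactly when $d(u,v)=\min\{e(u),e(v)\}=2$, i.e.\ when $u$ and $v$ lie in the same part and are distinct; otherwise $\epsilon_{uv}=0$. Ordering the vertices so that the $p$-part comes first, this gives
\[
\varepsilon(K_{p,q})=\begin{bmatrix} 2(J_p-I_p) & 0 \\ 0 & 2(J_q-I_q) \end{bmatrix},
\]
where $J_k$ is the all-ones matrix of order $k$.

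Finally I would read off the spectrum. The matrix $J_k-I_k$ has eigenvalues $k-1$ (once) and $-1$ (with multiplicity $k-1$), so the two diagonal blocks contribute eigenvalues $2(p-1),\,-2,\ldots,-2$ (with $-2$ appearing $p-1$ times) and $2(q-1),\,-2,\ldots,-2$ (with $-2$ appearing $q-1$ times), respectively. Summing absolute values gives
\[
E_\varepsilon(K_{p,q})=2(p-1)+2(p-1)+2(q-1)+2(q-1)=4(p+q-2),
\]
as claimed. There is no real obstacle here; the only thing to be careful about is the hypothesis $p,q\geq 2$, which is exactly what is needed to force every eccentricity to equal $2$ so that the matrix splits cleanly into two blocks.
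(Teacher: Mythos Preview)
Your proof is correct and follows essentially the same approach as the paper: you write $\varepsilon(K_{p,q})$ as the block-diagonal matrix with blocks $2(J_p-I_p)$ and $2(J_q-I_q)$, read off the spectrum, and sum the absolute values. The only difference is that you supply the justification for why every eccentricity equals $2$ and hence why the matrix has this block form, whereas the paper simply states the matrix directly.
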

\begin{proof} The eccentricity matrix of $K_{p,q}$ can be written as \[ \varepsilon(K_{p,q})=\left[ {\begin{array}{cc}
		2(J_p-I_p) & 0\\
		0 & 2(J_q-I_q) \\
		\end{array} } \right]\]
Therefore, \begin{equation}\label{bi-spec} spec_{\varepsilon}(K_{p,q})=\left\{ {\begin{array}{ccc}
		2(p-1) & 2(q-1)& -2\\
		1 & 1& p+q-2 \\
		\end{array} } \right\},
\end{equation}		
and hence $E_{\varepsilon}(K_{p,q})=4(p+q-2)$.
\end{proof}
There are only two connected graphs of order $3$, and only six connected graphs of order $4$. By an elementary calculation, we can say that there does not exist any $\varepsilon$-equienergetic graphs of order 3 and 4.
Let us consider the graphs $G_1$ and $G_2$ of order $5$ as shown in Figure $1$.
\begin{figure}
\centering
\includegraphics[height=4.5 cm, width=10.5 cm]{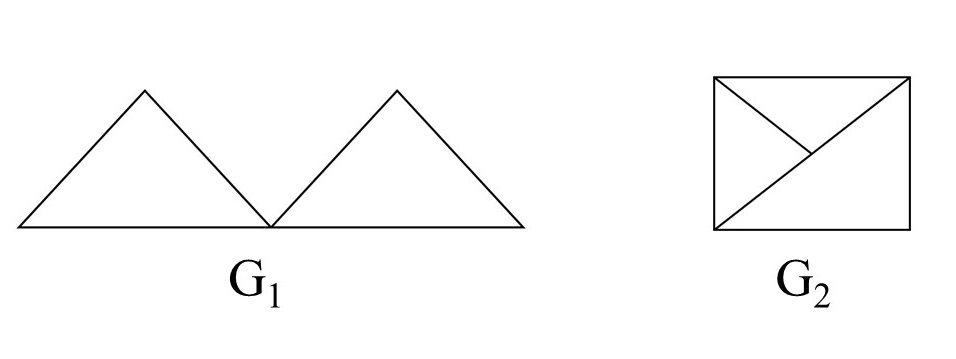}
\caption{Non-cospectral $\varepsilon$-equienergetic graphs of order $5$}
\end{figure}
Therefore,
\begin{align*}
spec_{\varepsilon}(G_1)=\left\{ {\begin{array}{ccc}
		2(1\pm \sqrt{2}) & -4 & 0\\
		1  & 1 & 2 \\
		\end{array} } \right\} \qquad \mbox{and} \qquad
spec_{\varepsilon}(G_1)=\left\{ {\begin{array}{ccc}
		\pm 2\sqrt{2}) & \pm 2& 0\\
		1 & 1 & 1  \\
		\end{array} } \right\}.		
\end{align*}
Hence, $E_{\varepsilon}(G_1)=4+4\sqrt{2}=E_{\varepsilon}(G_2)$. So, the graphs $G_1$ and $G_2$ are non-$\varepsilon$-cospectral $\varepsilon$-equienergetic graphs of order $5$. In the next theorem we show that, for $n\geq 6$, there exists a pair of non-$\varepsilon$-cospectral graphs which are $\varepsilon$-equienergetic.
\begin{thm}
For $n\geq 6$ and $p,q \geq 2$, the graphs $K_{p,n-p}$ and $K_{q,n-q}$ are $\varepsilon$-equienergetic, but not $\varepsilon$-cospectral  .
\end{thm}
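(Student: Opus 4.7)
The plan is to read both conclusions directly off Lemma~\ref{bi-ener} and the spectrum formula (\ref{bi-spec}) derived in its proof. Concretely, Lemma~\ref{bi-ener} gives $E_{\varepsilon}(K_{p,n-p}) = 4\bigl(p + (n-p) - 2\bigr) = 4(n-2)$, and the same substitution yields $E_{\varepsilon}(K_{q,n-q}) = 4(n-2)$; since the right-hand side depends only on $n$, the two graphs are automatically $\varepsilon$-equienergetic.

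For the non-cospectrality claim I would invoke (\ref{bi-spec}): the $\varepsilon$-spectrum of $K_{p,n-p}$ consists of the three distinct eigenvalues $\{2(p-1),\, 2(n-p-1),\, -2\}$ with multiplicities $\{1,\,1,\,n-2\}$, and analogously for $K_{q,n-q}$. Matching these as multisets forces $\{p-1,\, n-p-1\} = \{q-1,\, n-q-1\}$, equivalently $\{p,\, n-p\} = \{q,\, n-q\}$, which simply says $K_{p,n-p}$ and $K_{q,n-q}$ coincide as graphs. Therefore whenever the unordered pairs $\{p,\, n-p\}$ and $\{q,\, n-q\}$ differ, the two graphs cannot be $\varepsilon$-cospectral.

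The only subtle point is the role of the hypothesis $n \geq 6$: this is exactly the threshold at which at least two distinct unordered pairs $\{p,\, n-p\}$ with $p,\, n-p \geq 2$ become available, for instance $\{2,\, n-2\}$ and $\{3,\, n-3\}$, which are distinct as soon as $n \geq 6$. For $n \in \{4, 5\}$ only one such pair exists (after identifying $K_{p,q}$ with $K_{q,p}$), so no nontrivial pair can be constructed; this is the combinatorial reason the bound in the hypothesis is sharp. Apart from flagging this check, there is no genuine obstacle: the entire argument is a direct verification built on material already established in Lemma~\ref{bi-ener}, and the proof can be written in a few lines.
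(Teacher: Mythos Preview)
Your proposal is correct and follows essentially the same approach as the paper, whose proof is simply the one-line ``Proof follows from Lemma~\ref{bi-ener}.'' Your argument is in fact more careful: you make explicit the non-cospectrality check via (\ref{bi-spec}) and correctly observe that it requires the unordered pairs $\{p,n-p\}$ and $\{q,n-q\}$ to differ, a hypothesis the theorem statement leaves implicit.
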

\begin{proof}Proof follows from Lemma \ref{bi-ener}.
\end{proof}

Also, we have more general result. If $K_{n_1,n_2,\hdots,n_k}$ is a complete $k$-partite graphs on $n$ vertices, where $n=\sum_{i=1}^nn_i$ with $n_i\geq 2$, then by Theorem $4.6$ in \cite{ ours1}, we have
\[spec_{\epsilon}(K_{n_1,\hdots,n_k})=
	\left\{ {\begin{array}{ccccc}
		-2 & 2(n_1-1) & 2(n_2-1)&\hdots & 2(n_k-1)\\
		n-k &1 &1&\hdots &1\\
		\end{array} } \right\}.\]
Thus, $E_{\varepsilon}(K_{n_1,n_2,\hdots,n_k})=4(n-k)$, which is independent of $n_1,n_2,\hdots,n_k$. Therefore, every complete $k$-partite graphs are  $\varepsilon$-equienergetic but not $\varepsilon$-cospectral if they have at least $2$ vertices in each partition.

\textbf{Acknowledgement:} Iswar Mahato and  M. Rajesh Kannan would like to thank  the Department of Science and Technology, India, for financial support through the Early Career Research Award (ECR/2017/000643).

\bibliographystyle{plain}
\bibliography{reference}
\end{document}